%%%%%%%%%%%%%%%%%%%%%%%%%%%%%%%%%%%%%%%%%%%%%%%%%%%%%%%%%%%%%%%%%%%%%%
%%               This is the LaTeX2e file for                       %%
%%              Neighbourhoods and isotopies of knots               %%
%%                    in contact 3-manifolds                        %%
%%                         by                                       %%
%%            Fan Ding and Hansj\"org Geiges                        %%
%%                           2010                                   %%
%%%%%%%%%%%%%%%%%%%%%%%%%%%%%%%%%%%%%%%%%%%%%%%%%%%%%%%%%%%%%%%%%%%%%%
\documentclass{amsart}

\usepackage{amsmath,amssymb,amsthm,pinlabel}

\hyphenation{mani-fold mani-folds Le-gen-drian Mani-folds Geo-metry
geo-metry topo-logy Topo-logy prot-ago-nist}

%%%%%%%%%%%% Commands   %%%%%%%%%%%%%%%%%%

\def\co{\colon\thinspace}

\newcommand{\R}{\mathbb{R}}
\newcommand{\Z}{\mathbb{Z}}

\newcommand{\tb}{{\tt tb}}

\newcommand{\rot}{{\tt rot}}
\newcommand{\xist}{\xi_{\mathrm{st}}}

%%%%%%%%%%%% Theorems   %%%%%%%%%%%%%%%%%%

\newtheorem{thm}{Theorem}
\newtheorem{lem}[thm]{Lemma}
\newtheorem{prop}[thm]{Proposition}
\newtheorem{cor}[thm]{Corollary}

\theoremstyle{definition}

\theoremstyle{remark}
\newtheorem*{rem}{Remark}

\newtheorem*{ack}{Acknowledgement}

%%%%%%%%%%%%%%%%%%%%%%%%%%%%%%%%%%%%%%%%%%

\begin{document}

\title[Knots in contact $3$-manifolds]{Neighbourhoods and isotopies of
knots in contact $3$-manifolds}

\author{Fan Ding}
\address{Department of Mathematics, Peking University,
Beijing 100871, P.~R. China}
\email{dingfan@math.pku.edu.cn}
\author{Hansj\"org Geiges}
\address{Mathematisches Institut, Universit\"at zu K\"oln,
Weyertal 86--90, 50931 K\"oln, Germany}
\email{geiges@math.uni-koeln.de}
\date{}

\subjclass[2010]{53D10, 57M25, 57R52}

\thanks{F.~D.\ is partially supported by
grant no.\ 10631060 of the National Natural Science Foundation
of China and a DAAD -- K.~C.~Wong fellowship,
grant no.\ A/09/99005, at the Universit\"at zu K\"oln.}

\begin{abstract}
We prove a neighbourhood theorem for arbitrary knots
in contact $3$-manifolds. As an application we show that
two topologically isotopic Legendrian knots in a contact $3$-manifold 
become Legendrian isotopic after suitable stabilisations.
\end{abstract}

\maketitle

\section{Introduction}
For an oriented Legendrian knot $K$ in a $3$-dimensional contact manifold
$(M,\xi )$, i.e.\ a knot everywhere tangent to the
contact structure~$\xi$, there is a well-defined notion of
positive or negative stabilisation. By the Darboux theorem,
$(M,\xi)$ is locally diffeomorphic to $\R^3$ with its standard
contact structure $\xist=\ker (dz+x\, dy)$. In such a neighbourhood,
the Legendrian knot $K$ can be represented by its front projection
to the $yz$-plane; the $x$-coordinate can be recovered from the front
as $x=-dz/dy$. In this local picture, the stabilisation $S_{\pm}K$
of $K$ is obtained by adding a zigzag to the front, oriented downwards
(resp.\ upwards) for the positive (resp.\ negative) stabilisation.
Positive and negative stabilisations commute with each other,
and we write $S^m_+S^n_-K$ for an $m$-fold positive and $n$-fold
negative stabilisation of~$K$.
For more background information see \cite{etny05}
and~\cite{geig08}.

The following theorem says that, up to stabilisation,
the classification of Legendrian knots is purely topological.

\begin{thm}
\label{thm:isotopy}
If two oriented Legendrian knots $K_0$ and $K_1$ in a $3$-dimensional
contact manifold $(M,\xi)$ are topologically isotopic,
one can find Legendrian isotopic stabilisations $S_+^{m_0}S_-^{n_0}K_0$
and $S_+^{m_1}S_-^{n_1}K_1$.
\end{thm}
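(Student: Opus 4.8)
The plan is to reduce the theorem to a statement about Legendrian knots lying inside a single standard neighbourhood of the common topological knot type, where stabilisation has an explicit front-projection description. Since $K_0$ and $K_1$ are topologically isotopic, there is an ambient isotopy of $(M,\xi)$ carrying $K_0$ to a knot $K_1'$ that is $C^0$-close to, and in particular topologically isotopic to, $K_1$ while lying in a tubular neighbourhood of~$K_1$; because contactomorphisms preserve Legendrian isotopy type and commute with stabilisation, it suffices to prove the theorem for two Legendrian knots that are topologically isotopic through a thin solid torus. The key tool will be the neighbourhood theorem asserted in the abstract: a standard model for a (solid-torus) neighbourhood of an arbitrary knot, adapted to the contact structure, inside which every nearby Legendrian representative can be put into a normal form given by a front projection on the annulus $S^1\times[-1,1]$.

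Within such a neighbourhood the argument becomes the familiar front-projection analysis. First I would isotope each $K_i$ to be Legendrian and in general position with respect to the projection, so that it is described by a front with only cusps and transverse double points. Next I would compare the two fronts: after a Legendrian isotopy supported in the neighbourhood, I may assume $K_0$ and $K_1$ project to fronts that differ only by a sequence of the standard Reidemeister-type moves for Legendrian fronts, together with the addition or removal of zigzags. The essential point is that any two topologically isotopic fronts on the annulus can be connected by planar isotopies, the Legendrian Reidemeister moves, and \emph{stabilisation} moves (adding zigzags), but \emph{not} by destabilisation in general—one can always add zigzags but cannot freely remove them. This asymmetry is precisely why the conclusion only guarantees agreement \emph{after} stabilisation rather than outright Legendrian isotopy.

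Concretely, I would take the topological isotopy from $K_0$ to $K_1$ and subdivide it into finitely many elementary moves. Each move that is \emph{not} already a Legendrian isotopy (i.e.\ each point where the topological isotopy forces a tangency or a crossing change that a Legendrian front cannot realise directly) can be compensated by first stabilising, since a zigzag provides the extra slack—locally, a pair of cusps—needed to push the front across the forbidden configuration. Carrying this out move by move produces stabilisations $S_+^{m_0}S_-^{n_0}K_0$ and $S_+^{m_1}S_-^{n_1}K_1$ whose fronts agree up to Legendrian Reidemeister moves, hence which are Legendrian isotopic. The signs and numbers of zigzags required on each side are dictated by the rotation-number and Thurston--Bennequin-type bookkeeping along the isotopy.

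I expect the main obstacle to be establishing the \emph{normal form} cleanly: namely, showing that the neighbourhood theorem really does let one view \emph{both} $K_0$ and $K_1$ as fronts over the \emph{same} annular base, so that the two-dimensional front calculus applies verbatim. Once the two knots live in a common standard contact neighbourhood with a common projection, the combinatorial front argument is standard; the delicate part is globalising the Darboux-type front picture from $\R^3$ to a neighbourhood of an arbitrary (possibly knotted, non-trivially framed) knot, and ensuring that the stabilisation operation defined locally via zigzags coincides with the intrinsic stabilisation in this model. This is exactly where the neighbourhood theorem does the heavy lifting, and I would invoke it to guarantee that the local front picture and the zigzag description of $S_\pm$ are valid throughout the relevant solid torus.
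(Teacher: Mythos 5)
There is a genuine gap, in fact two, and together they hollow out both halves of your argument. First, the reduction step. You apply an ambient \emph{topological} isotopy of $M$ to carry $K_0$ to a knot $K_1'$ near $K_1$, and then invoke the fact that ``contactomorphisms preserve Legendrian isotopy type and commute with stabilisation.'' But the ambient isotopy you use is merely smooth, not contact: the image of $K_0$ under it is in general not Legendrian at all, its stabilisations are undefined, and nothing relates the Legendrian isotopy class (or the stabilisations) of $K_0$ to those of its image. Making this reduction honest is exactly the delicate point: one has to cover the whole isotopy $\phi_t$ by finitely many neighbourhoods on which $\xi$ is tight (Corollary~\ref{cor:nbhd} together with a Lebesgue-number argument), insert Legendrian $C^0$-approximations of the intermediate knots $\phi_{j/N}(S^1)$ in the overlaps, and then apply a local comparison lemma in each neighbourhood in turn. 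Your single-jump reduction skips all of this and in effect assumes the conclusion of the theorem for each intermediate step; this is precisely the gap in Dymara's suggested reduction \cite{dyma04} pointed out in the introduction.

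Second, the local step. Even once both knots lie in one standard solid-torus neighbourhood, your ``essential point'' --- that any two topologically isotopic fronts over the annulus become related by Legendrian Reidemeister moves after adding zigzags --- is not available as a known fact: it \emph{is} the theorem of Fuchs and Tabachnikov \cite{futa97} (in its $S^1\times\R^2$ version), i.e.\ the full strength of the statement being proved, restricted to the model. Your sketch of ``compensating each forbidden configuration by a zigzag'' is the hard combinatorial core of that theorem and is not reduced to checkable steps. The paper instead proves the local statement (Lemma~\ref{lem:isotopy}) by convex surface theory: it introduces an auxiliary Legendrian knot $K$ far from both knots, because $K_0$ and $K_1$ themselves need not cobound an embedded annulus (the Whitehead link is the counterexample given); it stabilises to make the annulus convex, destabilises boundary-parallel dividing curves, and then uses the Giroux criterion --- this is where tightness of the neighbourhood, which your proposal never uses, is essential --- together with Giroux flexibility and the explicit model isotopy of Lemma~\ref{lem:model}. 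So as written, your proposal either has to prove the annular front-calculus theorem from scratch or cite \cite{futa97}, and in either case it still lacks the global chaining argument; both the reduction and the local comparison are asserted rather than established.
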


For $(M,\xi)=(\R^3,\xist)$ this theorem was proved by
Fuchs and Tabachnikov~\cite[Theorem~4.4]{futa97}.
Dymara~\cite{dyma04} has suggested to prove the theorem for general
$(M,\xi)$ by reducing it to that special case using the Darboux theorem,
without providing details. In order to make such an argument precise,
one needs considerations very much like those in the proofs of
Lemma~\ref{lem:isotopy} and Theorem~\ref{thm:isotopy} below.

In the present note we give a proof based on convex surface theory
and a neighbourhood theorem for arbitrary knots in contact $3$-manifolds;
this argument does not depend on the result of Fuchs and Tabachnikov.

\begin{rem}
For homologically trivial Legendrian knots one can define
the Thurston--Bennequin invariant $\tb$ and the rotation number~$\rot$
(relative to a choice of Seifert surface).
The parity of the sum $\tb+\rot$ is invariant under stabilisation,
so the theorem implies that this parity is constant within any
(homologically trivial) knot type. See~\cite[Remark~4.6.35]{geig08}
for a more general statement of this parity condition.
\end{rem}

\begin{ack}
We thank Bijan Sahamie for useful comments.
\end{ack}
\section{A neighbourhood theorem}
Denote the obvious coordinates on the
manifold $S^1\times\R^2$ by $\theta ,x,y$.  Throughout this note we
write $S^1$ as shorthand for $S^1\times\{ 0\}\subset S^1\times\R^2$.

\begin{lem}
Let $\alpha$ be a contact form defined near $S^1\subset S^1\times\R^2$.
Then there is a neighbourhood $U$ of $S^1$ and a smooth function
$\lambda\co U\rightarrow\R^+$ with $\lambda|_{S^1}\equiv 1$
and such that along $S^1$ the Reeb vector field of the contact form
$\lambda\alpha|_U$ is transverse to~$S^1$.
\end{lem}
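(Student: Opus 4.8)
The plan is to avoid computing the Reeb field explicitly and instead exploit only that it spans the kernel of $d(\lambda\alpha)$. First I observe that for any smooth $\lambda\co U\to\R^+$ the form $\lambda\alpha$ is again a contact form, since $(\lambda\alpha)\wedge d(\lambda\alpha)=\lambda^2\,\alpha\wedge d\alpha\neq 0$; hence its Reeb vector field $R$ spans the $1$-dimensional kernel of $d(\lambda\alpha)$. Consequently, at a point $p\in S^1$ the vector $R(p)$ is transverse to $S^1$ if and only if $\partial_\theta\notin\ker d(\lambda\alpha)_p$, that is, if and only if the covector $\iota_{\partial_\theta}d(\lambda\alpha)$ is nonzero at $p$. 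This reduces the lemma to arranging the nonvanishing of a single covector along $S^1$.

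Next I would compute this covector. Imposing $\lambda|_{S^1}\equiv 1$ forces $d\lambda(\partial_\theta)=0$ along $S^1$, so from $d(\lambda\alpha)=d\lambda\wedge\alpha+\lambda\,d\alpha$ one obtains
\[
\iota_{\partial_\theta}d(\lambda\alpha)|_{S^1}=-\alpha(\partial_\theta)\,d\lambda+\iota_{\partial_\theta}d\alpha .
\]
Both summands annihilate $\partial_\theta$: for the second because $d\alpha(\partial_\theta,\partial_\theta)=0$, and for the first because of the constraint $d\lambda(\partial_\theta)=0$. Thus along $S^1$ the relevant covector lies in the plane spanned by $dx$ and $dy$, and my only freedom is in $d\lambda|_{S^1}$, which (for instance by taking $\lambda$ affine in $x,y$) may be prescribed to be any section of $\mathrm{span}(dx,dy)$.

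The conceptual heart is a case distinction according to whether $S^1$ is tangent to $\ker\alpha$, and I expect this to be the main obstacle. Where $\alpha(\partial_\theta)\neq 0$ the covector vanishes precisely when $d\lambda|_{S^1}$ equals the single forbidden value $\iota_{\partial_\theta}d\alpha/\alpha(\partial_\theta)$, which I can steer clear of. The delicate points are those where $\alpha(\partial_\theta)=0$, i.e.\ where $\partial_\theta\in\ker\alpha$: there the first summand drops out and $\lambda$ has no influence on the covector. Here the contact condition rescues the argument, since $d\alpha$ is nondegenerate on $\ker\alpha$ and $\partial_\theta$ is then a nonzero vector of $\ker\alpha$, whence $\iota_{\partial_\theta}d\alpha$ is automatically nonzero. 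So precisely at the problematic tangency points no adjustment is needed.

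It remains to choose $d\lambda|_{S^1}$ globally. Writing $\phi=\iota_{\partial_\theta}d\alpha/\alpha(\partial_\theta)$ on the open set $\{\alpha(\partial_\theta)\neq 0\}\subset S^1$ and identifying $\mathrm{span}(dx,dy)$ with $\R^2$, I merely need a value that $\phi$ avoids. As $\phi$ is a smooth $\R^2$-valued function on a $1$-manifold, its image has measure zero, so I may pick a constant $v=(v_1,v_2)\in\R^2$ outside it and set $\lambda=1+v_1x+v_2y$. Then $\lambda|_{S^1}\equiv 1$, the value $d\lambda|_{S^1}=v$ differs from $\phi$ wherever $\alpha(\partial_\theta)\neq 0$, and shrinking $U$ to a sufficiently small neighbourhood of $S^1$ ensures $\lambda>0$ on $U$. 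By the displayed computation, $\iota_{\partial_\theta}d(\lambda\alpha)$ is then nonzero all along $S^1$, which is exactly the asserted transversality of the Reeb vector field.
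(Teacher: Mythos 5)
Your proof is correct and follows essentially the same route as the paper's: the same affine ansatz $\lambda=1+v_1x+v_2y$, the same reduction of transversality to the nonvanishing of $\iota_{\partial_\theta}d(\lambda\alpha)$ along $S^1$, the same appeal to the contact condition at points where $\alpha(\partial_\theta)=0$, and the same Sard-type measure-zero argument to choose the constants. The only difference is presentational: you phrase the computation invariantly and make the case distinction explicit, where the paper works with the component functions $\lambda_1,\lambda_2,\mu$.
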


\begin{proof}
We make the ansatz $\lambda (\theta,x,y)=1+ax+by$, where $a,b$ are
real constants that we shall have to choose judiciously.
Write $\beta:=\lambda\alpha$. Then
\[ d\beta = (1+ax+by)\, d\alpha + a\, dx\wedge\alpha +b\, dy\wedge\alpha.\]
We want to choose $a,b\in\R$ such that $i_{\partial_{\theta}}d\beta$
does not vanish along~$S^1$. We define three
smooth functions on $S^1$ by
\[ \lambda_1:=d\alpha(\partial_{\theta},\partial_x)|_{S^1},\;\;\;
\lambda_2:=d\alpha(\partial_{\theta},\partial_y)|_{S^1},\;\;\;
\mu:=\alpha(\partial_{\theta})|_{S^1}.\]
Then
\[ i_{\partial_{\theta}}d\beta|_{S^1}=(\lambda_1(\theta )-a\mu(\theta))\, dx
+(\lambda_2(\theta)-b\mu(\theta))\, dy.\]

Since $\alpha$ is a contact form, we have
\[ (\lambda_1(\theta),\lambda_2(\theta))\neq (0,0) \;\;\;\mbox{\rm on}\;\;\;
\{\theta\in S^1\co \mu(\theta)=0\}.\]
Hence, any point $(a,b)\in\R^2$ not in the image of the map
\[ \begin{array}{ccc}
\{\theta\in S^1\co \mu(\theta)\neq 0\} & \longrightarrow & \R^2\\
\theta                              & \longmapsto     & (\lambda_1(\theta),
                                                         \lambda_2(\theta))
                                                         /\mu(\theta)
\end{array} \]
will satisfy our requirements. By Sard's theorem such points
exist in abundance.
\end{proof}

The following proposition includes as special cases the neighbourhood
theorems for, respectively, Legendrian and transverse knots,
cf.~\cite[Section~2.5]{geig08}.

\begin{prop}
Suppose $\xi_i=\ker\alpha_i$, $i=1,2$, are two positive
contact structures defined near $S^1\subset S^1\times\R^2$ with the
property that there is a smooth function $\mu\co S^1\rightarrow\R^+$
such that $\alpha_1(\partial_{\theta})|_{S^1}=
\mu\alpha_2(\partial_{\theta})|_{S^1}$. Then there is a neighbourhood
$U$ of $S^1$ and a contactomorphism $f\co (U,\xi_1)\rightarrow
(f(U),\xi_2)$ equal to the identity on~$S^1$.
\end{prop}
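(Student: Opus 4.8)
The plan is to prove the statement by a relative version of Gray stability (Moser's trick), after first bringing the two contact forms into agreement along $S^1$. The argument therefore splits into a \emph{normalisation} step along $S^1$ and a \emph{Moser} step on a neighbourhood.

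For the normalisation I would first exploit the hypothesis to rescale. Replacing $\alpha_2$ by $\mu\alpha_2$ (with $\mu$ extended to a positive function on a neighbourhood of $S^1$) leaves $\xi_2$ unchanged and arranges that $\alpha_1(\partial_\theta)$ and $\alpha_2(\partial_\theta)$ agree along $S^1$. It then remains to match the two forms on the normal directions $\partial_x,\partial_y$ along $S^1$. I would realise this by a diffeomorphism $\psi$ fixing $S^1$ pointwise, of the shape $\psi(\theta,x,y)=(\theta+a(\theta)x+b(\theta)y,\ \Phi_\theta(x,y))$ with $\Phi_\theta\in GL_2(\R)$ depending smoothly on $\theta$; its $1$-jet along $S^1$ is a bundle automorphism fixing $\partial_\theta$, and I would choose $a,b,\Phi_\theta$ so that $\psi^*\alpha_2|_{S^1}=\alpha_1|_{S^1}$ as sections of $T^*(S^1\times\R^2)|_{S^1}$. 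The defining relation is a single covector identity whose $d\theta$-component is automatically satisfied by the previous rescaling, so it reduces to a solvable linear problem in the $(dx,dy)$-components.

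The main obstacle lies precisely here: where $\alpha_i(\partial_\theta)|_{S^1}$ vanishes (the Legendrian locus) one cannot use the shear and must absorb the normal discrepancy into $\Phi_\theta$, whereas where it is nonzero (the transverse locus) the shear is the natural device; producing a single smooth solution over all of $S^1$ requires blending these two regimes. The hypothesis that the $\partial_\theta$-components are conformally related is exactly what makes the two forms Legendrian, respectively transverse, at the same points of $S^1$, so that the regimes are compatible and the blending is possible. The preceding lemma can be used to streamline this: after a further rescaling, equal to $1$ on $S^1$ and hence harmless for the hypothesis, one may assume the Reeb fields are transverse to $S^1$, which yields a cleaner normal form for the $1$-jet of each $\alpha_i$ along $S^1$.

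For the Moser step I would interpolate $\alpha_t:=(1-t)\alpha_1+t\alpha_2$. On $S^1$ the two forms now coincide, and since both contact structures are positive (and co-oriented compatibly, as $\alpha_1=\alpha_2$ there), $\alpha_t\wedge d\alpha_t=(1-t)\,\alpha\wedge d\alpha_1+t\,\alpha\wedge d\alpha_2>0$ along $S^1$, where $\alpha:=\alpha_1|_{S^1}=\alpha_2|_{S^1}$; by compactness of $S^1\times[0,1]$ each $\alpha_t$ is then contact on a fixed neighbourhood $U$. Seeking an isotopy $\psi_t$ with $\psi_t^*\alpha_t=\lambda_t\alpha_0$ and generating field $X_t\in\xi_t$, the Moser equation $\dot\alpha_t+i_{X_t}d\alpha_t=g_t\alpha_t$ fixes $g_t=\dot\alpha_t(R_t)$ by pairing with the Reeb field $R_t$, and then determines $X_t$ uniquely from nondegeneracy of $d\alpha_t|_{\xi_t}$. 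Because $\dot\alpha_t=\alpha_2-\alpha_1$ vanishes along $S^1$, the same nondegeneracy forces $X_t|_{S^1}=0$, so the flow fixes $S^1$; shrinking $U$ if necessary, the time-one map $\psi_1$ exists. Finally $f:=\psi\circ\psi_1$ satisfies $f^*\alpha_2=(\text{positive function})\cdot\alpha_1$, hence is the desired contactomorphism $(U,\xi_1)\rightarrow(f(U),\xi_2)$ equal to the identity on $S^1$.
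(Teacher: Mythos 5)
Your overall strategy coincides with the paper's (rescale by $\mu$; normalise the two forms along $S^1$ by a diffeomorphism fixing $S^1$; then run a relative Gray--Moser argument), and your Moser step is correct and in fact more detailed than the paper's bare citation of Gray stability: the convexity computation $\alpha_t\wedge d\alpha_t>0$ along $S^1$, the identity $g_t=\dot\alpha_t(R_t)$, and the observation that $\dot\alpha_t|_{S^1}=0$ forces $X_t|_{S^1}=0$ are all right. The gap is in the normalisation step, which is the actual content of the proposition, and you flag it yourself. Writing $h$ for the common $\partial_\theta$-component after rescaling and $p_i,q_i$ for the $dx$-, $dy$-components of $\alpha_i$ along $S^1$, your ansatz requires smooth functions $a,b,c_{11},c_{12},c_{21},c_{22}$ on $S^1$ with $\det(c_{ij})>0$ solving $ah+c_{11}p_2+c_{21}q_2=p_1$ and $bh+c_{12}p_2+c_{22}q_2=q_1$. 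You then assert that the Legendrian regime ($h=0$) and the transverse regime ($h\neq 0$) can be ``blended'' into one global smooth solution because the hypothesis makes the two loci coincide. That assertion is precisely what needs proof, and nothing in your text supplies it: the pointwise solution sets are affine subspaces varying with $\theta$, intersected with the \emph{non-convex} open condition $\det(c_{ij})>0$, so a partition of unity does not apply (convex combinations of solutions can destroy $\det>0$), and the family of solution sets is not locally trivial across the transition locus $h=0$. Saying the regimes are ``compatible'' is a heuristic, not an argument.

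The missing idea --- and the real purpose of the paper's preparatory Lemma, which you invoke only as yielding ``a cleaner normal form'' --- is that Reeb transversality produces a single formula valid uniformly in both regimes: $X_i:=\bigl(\partial_\theta-\alpha_i(\partial_\theta)R_i\bigr)|_{S^1}$ is a section of $\xi_i|_{S^1}$ (since $\alpha_i(X_i)=0$) which is \emph{nowhere zero}, because $\partial_\theta$ is tangent to $S^1$ while $R_i$ is transverse to it, so they can never be proportional. Completing to positive frames $(R_i,X_i,Y_i)$ and taking $g$ to be any germ of an orientation-preserving diffeomorphism with $g|_{S^1}=\mathrm{id}$, $Tg(R_1)=R_2$ and $Tg(Y_1)=Y_2$, one gets $Tg(X_1)=X_2$ for free --- since $Tg(\partial_\theta)=\partial_\theta$ and $\alpha_1(\partial_\theta)=\alpha_2(\partial_\theta)$ along $S^1$ --- and evaluating on the frame gives $g^*\alpha_2=\alpha_1$ along $S^1$, which is exactly the normalisation you wanted. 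Note that this $Tg$ fixes $\partial_\theta$, so it is realised by your shear-plus-$GL_2$ parametrisation; your ansatz was general enough, only the existence argument was missing. Replacing your blending claim by this frame construction closes the gap, and the resulting proof is the paper's.
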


\begin{proof}
By extending $\mu$ to a smooth positive function on $S^1\times\R^2$
and replacing $\alpha_2$ by $\mu\alpha_2$ we may assume
that $\alpha_1(\partial_{\theta})|_{S^1}=\alpha_2(\partial_{\theta})|_{S^1}$.
Moreover, the lemma allows us to assume that the Reeb vector field
$R_i$ of $\alpha_i$ is transverse to~$S^1$ for $i=1,2$.

Then the vector field
\[ X_i:=\bigl(\partial_{\theta}-\alpha_i(\partial_{\theta})R_i\bigr)|_{S^1} \]
is a non-zero section of $\xi_i|_{S^1}$. Choose a section $Y_i$
of $\xi_i|_{S^1}$ linearly independent of $X_i$ and such that
$R_i,X_i,Y_i$ constitutes a positive frame
of $T(S^1\times\R^2)|_{S^1}$. Then $R_i,\partial_{\theta},Y_i$
is likewise a positive frame of $T(S^1\times\R^2)|_{S^1}$.

We now find a germ of an orientation-preserving diffeomorphism $g$
near $S^1$ with the properties
\begin{itemize}
\item[(i)] $g|_{S^1}=\mbox{\rm id}$,
\item[(ii)] $Tg(R_1)=R_2$ and $Tg(Y_1)=Y_2$ along $S^1$.
\end{itemize}
Then also $Tg(X_1)=X_2$, so
$\alpha_1$ and $\beta_1:=g^*\alpha_2$ are contact forms
near $S^1$ that coincide along~$S^1$. Hence, in a sufficiently
small neighbourhood of~$S^1$, we have a $1$-parameter
family $(1-t)\alpha_1+t\beta_1$ of contact forms; this homotopy
of contact forms is stationary along~$S^1$.
Gray stability \cite[Theorem~2.2.2]{geig08}
gives us a germ of a diffeomorphism $h$
near $S^1$ sending $\ker\alpha_1$ to $\ker\beta_1$
and equal to the identity along~$S^1$.
The composition $f:=g\circ h$ is the desired germ of a diffeomorphism
near~$S^1$.
\end{proof}

\begin{cor}
\label{cor:nbhd}
Any knot $K$ in a $3$-dimensional contact manifold $(M,\xi)$ has
a neighbourhood $U$ such that $\xi|_U$ is tight.
\end{cor}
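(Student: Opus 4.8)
The plan is to use the neighbourhood Proposition to reduce the germ of $\xi$ along $K$ to a model that manifestly embeds into the standard tight contact structure on~$\R^3$. First I would fix a tubular neighbourhood of~$K$. Since $M$ is oriented by $\alpha\wedge d\alpha$ and $K$ is a circle, the normal bundle of $K$ is an orientable rank-$2$ bundle over $S^1$, hence trivial; so a tubular neighbourhood of $K$ is diffeomorphic to $S^1\times\R^2$ with $K=S^1\times\{0\}=S^1$. Transporting $\xi$ along this identification gives a positive contact structure $\xi_1=\ker\alpha_1$ defined near $S^1\subset S^1\times\R^2$, and I set $g:=\alpha_1(\partial_{\theta})|_{S^1}$, a smooth function on~$S^1$. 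This function records, at each point of~$K$, whether $K$ is Legendrian ($g=0$), positively transverse ($g>0$) or negatively transverse ($g<0$).

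Next I would produce a tight model realising the \emph{same} function~$g$, namely a knot $K'$ in $(\R^3,\xist)$, $\xist=\ker(dz+x\,dy)$. Choosing an immersed loop $\theta\mapsto(x(\theta),y(\theta))$ in the plane whose signed area $\oint x\,dy$ equals $\int_{S^1}g\,d\theta$, the function $z(\theta):=\int_0^{\theta}\bigl(g-x\dot y\bigr)\,ds$ is well defined and periodic, and the closed curve $\gamma=(x,y,z)$ is an immersion with $\xist(\dot\gamma)=\dot z+x\dot y=g$. A general-position perturbation makes $\gamma$ embedded, and it can be carried out while restoring $\xist(\dot\gamma)=g$ (re-adjusting $z$ and rescaling to preserve the area constraint). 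Identifying a tubular neighbourhood of $K'$ with $S^1\times\R^2$ so that $\partial_{\theta}|_{S^1}=\dot\gamma$, the induced contact form $\alpha_2$ satisfies $\alpha_2(\partial_{\theta})|_{S^1}=\xist(\dot\gamma)=g$; write $\xi_2=\ker\alpha_2$.

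Finally, since $\alpha_1(\partial_{\theta})|_{S^1}=g=\alpha_2(\partial_{\theta})|_{S^1}$, the hypothesis of the Proposition holds with $\mu\equiv 1$, yielding a neighbourhood $U$ of $S^1$ and a contactomorphism $f\co(U,\xi_1)\to(f(U),\xi_2)$ fixing~$S^1$. As $f(U)$ is an open subset of $(\R^3,\xist)$, which is tight, and tightness passes to open subsets and is preserved by contactomorphisms, $\xi_2|_{f(U)}$ is tight and hence so is $\xi_1|_U$, i.e.\ $\xi|_U$ is tight. The step I expect to be the main obstacle is arranging the model so that the matching function $\mu$ of the Proposition is genuinely smooth and positive; this is precisely why I would build $K'$ to reproduce $g$ exactly, forcing $\mu\equiv 1$, rather than merely reproducing its sign pattern, for otherwise the ratio $\mu$ need not extend smoothly across the Legendrian points where $g$ vanishes.
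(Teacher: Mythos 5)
Your proposal is correct, and it rests on the same two pillars as the paper's proof: reduce to the neighbourhood Proposition with matching function $\mu\equiv 1$, by building a model knot in a tight contact manifold that realises \emph{exactly} the function $g=\alpha(\partial_{\theta})|_{S^1}$ -- and you correctly identify why exact matching (rather than matching up to sign pattern) is forced by the zeros of $g$ at Legendrian points. Where you differ is in the choice of model. The paper takes the target to be the standard tight $(S^1\times\R^2,\ker(dy-x\,d\theta))$ and the model knot to be the graph $\theta\mapsto(\theta,-g(\theta),0)$; this curve is automatically closed and embedded, and the verification $\alpha_0(\mbox{tangent})=-x=g$ is one line, so no genericity argument is needed anywhere. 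Your target $(\R^3,\xist)$ instead forces you to close up the curve by hand, which is where the signed-area condition $\oint x\,dy=\int_{S^1}g\,d\theta$ comes from, and then to achieve embeddedness by a perturbation compatible with both the contact condition and the area constraint. That perturbation step is the one place your argument is only sketched, but it is repairable: double points of $\gamma$ are planar double points at which the partial integrals of $g-x\dot y$ agree, and one can shift the partial area along one arc between the two parameter values of each (generically finite, transverse) double point while compensating on the complementary arc, keeping the total area and hence periodicity of $z$. So both proofs work and both ultimately invoke Bennequin-type tightness of the model (for the paper's model, tightness of $S^1\times\R^2$ follows since its universal cover is the standard tight $\R^3$); the paper's choice of model simply eliminates precisely the steps you flag as the main obstacle. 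One further small point you share with the paper but state more casually: the identification of the tubular neighbourhood with $S^1\times\R^2$ must be \emph{chosen} so that the transported contact structure is positive (it is not automatic for an arbitrary identification), since the Proposition is stated for positive contact structures.
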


\begin{proof}
Identify a neighbourhood of $K\subset M$ with a neighbourhood
of $S^1\subset S^1\times\R^2$ such that $K$ becomes identified with~$S^1$.
We continue to write $\xi =\ker\alpha$
for the contact structure in this neighbourhood; the
identification of neighbourhoods may be done in such a way that
$\xi$ is a positive contact structure near $S^1\subset S^1\times\R^2$.
Define a smooth function $\mu\co S^1\rightarrow\R$ by
$\mu=\alpha(\partial_{\theta})|_{S^1}$.

The $1$-form $\alpha_0=dy-x\, d\theta$ defines the standard positive
tight contact structure $\xi_0=\ker\alpha_0$ on $S^1\times\R^2$.
Now consider the embedding $i\co S^1\rightarrow S^1\times\R^2$ given by
$\theta\mapsto (\theta ,-\mu(\theta),0)$. Then
\[ i^*\alpha_0(\partial_{\theta})=\mu
=\alpha(\partial_{\theta})|_{S^1}.\]
By the preceding proposition there is
a neighbourhood of $K$ contactomorphic to a neighbourhood
of $i(S^1)$ in the tight contact manifold $(S^1\times\R^2,\xi_0)$.
\end{proof}
\section{Proof of the isotopy theorem}
We first want to prove a local version of Theorem~\ref{thm:isotopy}
(see Lemma~\ref{lem:isotopy} below).
We begin with one of the two model situations of such a local isotopy.
In $S^1\times\R^2$ with the standard contact structure
$\xi_0=\ker (dy-x\, d\theta)$ we have for each $s\in\R$
a Legendrian knot $\Lambda_s:=S^1\times\{ (0,s)\}$.
In the front projection to the $(\theta,y)$-plane,
where we think of $S^1$ as $\R/2\pi\Z$, the knot
$\Lambda_s$ is represented by a horizontal line
at level $y=s$ (see Figure~\ref{figure:isotopy-Lambda}).
We give $\Lambda_s$ the orientation corresponding to the
positive $\theta$-direction.

The annulus
\[ A_0:=\{ (\theta ,x,y)\in S^1\times\R^2\co x^2+y^2=1,\; x\geq 0\} \]
with boundary $\Lambda_1\sqcup\Lambda_{-1}$ (one of them with
reversed orientation) is transverse to the
contact vector field $x\partial_x+y\partial_y$ and hence a
convex surface in the sense of Giroux~\cite{giro91}.
The dividing set of~$A_0$, i.e.\ the set of points where the contact
vector field is tangential to the contact structure, consists
of a single circle $A_0\cap\{ y=0\}$.

\begin{lem}
\label{lem:model}
The Legendrian knots $S_+\Lambda_1$ and $S_+\Lambda_{-1}$ are
Legendrian isotopic inside any given neighbourhood of
the annulus~$A_0$.
\end{lem}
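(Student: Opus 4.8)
The plan is to use the convex annulus $A_0$ directly: its dividing set will both certify that suitable stabilisations of $\Lambda_1$ and $\Lambda_{-1}$ can be made to lie on a small perturbation of $A_0$, and organise an isotopy between them that stays in an arbitrarily small neighbourhood $N$ of $A_0$. Write $\Gamma:=A_0\cap\{y=0\}$ for the dividing curve and $A_0^\pm:=A_0\cap\{\pm y>0\}$ for the two components of $A_0\setminus\Gamma$, so that $\Lambda_1\subset\partial A_0^+$ and $\Lambda_{-1}\subset\partial A_0^-$.

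First I would record the framing data. Both $\Lambda_{\pm 1}$ are disjoint from $\Gamma$ and parallel to it; a short computation (the contact framing is the push-off along the Reeb field $\partial_y$, the surface framing the inward push-off along $\partial_x$, and neither rotates along the $\theta$-circle) shows that each $\Lambda_{\pm 1}$ has twisting number $0$ relative to~$A_0$. Since a stabilisation lowers the twisting number by one, each of $S_+\Lambda_1$ and $S_+\Lambda_{-1}$ has twisting $-1$. A Legendrian curve on a convex surface that is isotopic to the core and has twisting $-1$ must meet $\Gamma$ transversally in exactly two points, so the Legendrian Realisation Principle lets me isotope $S_+\Lambda_1$ and $S_+\Lambda_{-1}$, within $N$, onto $A_0$ as core-parallel Legendrian curves $L_+$ and $L_-$ crossing $\Gamma$ twice. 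Tracking the downward zigzag through this step shows that the finger of $L_+$ reaches from $A_0^+$ across $\Gamma$ into $A_0^-$, while that of $L_-$ reaches from $A_0^-$ across $\Gamma$ into $A_0^+$: the two fingers lie on opposite sides of $\Gamma$.

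The crux, and the step I expect to be the main obstacle, is then to join $L_+$ and $L_-$ by a Legendrian isotopy inside $N$. This cannot be achieved by an isotopy supported on $A_0$ that keeps the two intersection points with $\Gamma$, because moving a finger from the $A_0^-$ side to the $A_0^+$ side forces a tangency with $\Gamma$ at which the geometric intersection number would momentarily drop; so the extra dimension provided by $N$ is essential. Concretely I would flip the finger across $\Gamma$ by an explicit isotopy in the front projection to the $(\theta,y)$-cylinder: push the finger off $A_0$ in the $x$-direction, carry it over the dividing locus, and lay it down on the opposite side, checking at each instant that the slope condition $x=dy/d\theta$ keeps the curve Legendrian and inside $N$. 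The twisting number is preserved throughout, so no extra stabilisation is created and the end result is precisely the representative $L_-$ of $S_+\Lambda_{-1}$. Alternatively one can realise the same flip by attaching a bypass extracted from the convex structure of $A_0$; either description is the delicate point.

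Finally, because $A_0$ is convex it has arbitrarily small standard neighbourhoods, so all of the above moves can be confined to any prescribed $N$, giving the isotopy in the required place. As a consistency check, the diffeomorphism $(\theta,x,y)\mapsto(-\theta,x,-y)$ preserves $\xi_0$ and exchanges $\Lambda_1$ with $\Lambda_{-1}$; it predicts that the two stabilised knots should match, although, being orientation-reversing, it is not itself contact-isotopic to the identity and so does not by itself furnish the isotopy.
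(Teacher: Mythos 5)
Your route is genuinely different from the paper's: the paper proves this lemma by exhibiting an explicit isotopy of fronts (its Figure~1), using only the observation that closeness to $A_0$ translates into $y^2+(dy/d\theta)^2\approx 1$; convex surface theory enters the paper only in the \emph{next} lemma, which reduces the general situation to this model. Your plan is to run convex surface theory inside the model itself, and it breaks down exactly at the step you flag as the crux. The two realised curves do not differ merely by the side of $\Gamma$ into which a finger points: the bulk of $L_+$ winds once around the annulus near $\Lambda_1$, inside $A_0^+$, with a short finger into $A_0^-$, whereas the bulk of $L_-$ winds around near $\Lambda_{-1}$, inside $A_0^-$, with a short finger into $A_0^+$. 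Hence ``flipping the finger of $L_+$ across $\Gamma$'' leaves a curve whose bulk still lies over $A_0^+$; it is not $L_-$, and joining it to $L_-$ is precisely the original problem, untouched. The flipped configuration is also inconsistent as a curve \emph{on} the convex annulus: after the flip it would be disjoint from $\Gamma$, so by the formula $t(L)=-\frac{1}{2}\#(L\cap\Gamma)$ it would have twisting $0$, contradicting the twisting $-1$ which, as you note, is preserved throughout. So the flip can at best end at some knot merely \emph{near} $A_0$, close to $\Lambda_1$, and nothing is gained. The isotopy that actually exists---and whose construction is the entire content of the lemma, supplied by the paper's Figure~1---is global, not local: the downward finger of $S_+\Lambda_1$ grows across the whole annulus, its strands running along arcs of slope $\approx\sqrt{1-y^2}$, until the tip reaches $y=-1$ and the roles of bulk and finger are exchanged; at every stage the knot crosses the locus $\{y=0\}$ twice, so no invariant obstructs it, but it is in no sense a flip of a finger to the other side of $\Gamma$, and it has to be written down.

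There are secondary gaps as well. The Legendrian Realisation Principle perturbs a convex surface so that a curve \emph{drawn on it} becomes Legendrian; it does not isotope a given Legendrian knot lying near the surface onto the surface. To identify the realised finger curve with $S_+\Lambda_1$ one needs a destabilisation argument (the region in $A_0$ between the finger curve and $\Lambda_1$ is an annulus with a single boundary-parallel dividing arc, cf.\ \cite[Proposition~3.18]{hond00}), and above all a justification of the \emph{sign}: your dividing-set bookkeeping never distinguishes $S_+$ from $S_-$, yet the sign in the lemma is essential and is selected by the geometry of $A_0$ (downward zigzags have the non-negative slopes that the half-cylinder $\{x\geq 0\}$ permits). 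Your fallback of ``attaching a bypass extracted from the convex structure of $A_0$'' is not available either: a sufficiently small neighbourhood of a convex surface is $I$-invariant, and all parallel copies of $A_0$ in it have the same dividing set, so it contains no nontrivial bypasses. Finally, a small point: the map $(\theta,x,y)\mapsto(-\theta,x,-y)$ is orientation-\emph{preserving} on $S^1\times\R^2$; what it reverses is the coorientation of $\xi_0$ and the orientation of the knots, which is the correct reason it exchanges $S_+\Lambda_1$ and $S_+\Lambda_{-1}$ without providing a Legendrian isotopy.
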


\begin{rem}
It follows from a result of Traynor~\cite{tray97} that no such isotopy
exists between the unstabilised knots $\Lambda_1$ and~$\Lambda_{-1}$.
\end{rem}

\begin{proof}[Proof of Lemma~\ref{lem:model}]
The $x$-coordinate of a point on a Legendrian knot is given as the
slope $dy/d\theta$ of the front projection at the corresponding point
in the $(\theta ,y)$-plane. Hence, the condition that
a Legendrian knot be close to the annulus~$A_0$ translates into
$y^2+(dy/d\theta)^2$ being close to $1$ for all points
on the front projection of the knot. An isotopy of the front
of $S_+\Lambda_1$ to that of $S_+\Lambda_{-1}$ via fronts
that satisfy this condition is shown in Figure~\ref{figure:isotopy-Lambda}.
\end{proof}

\begin{figure}[h]
\labellist
\small\hair 2pt
\pinlabel $\Lambda_1$ [r] at 0 470
\pinlabel $\Lambda_0$ [r] at 0 397
\pinlabel $\Lambda_{-1}$ [r] at 0 325
\endlabellist
\centering
\includegraphics[scale=0.45]{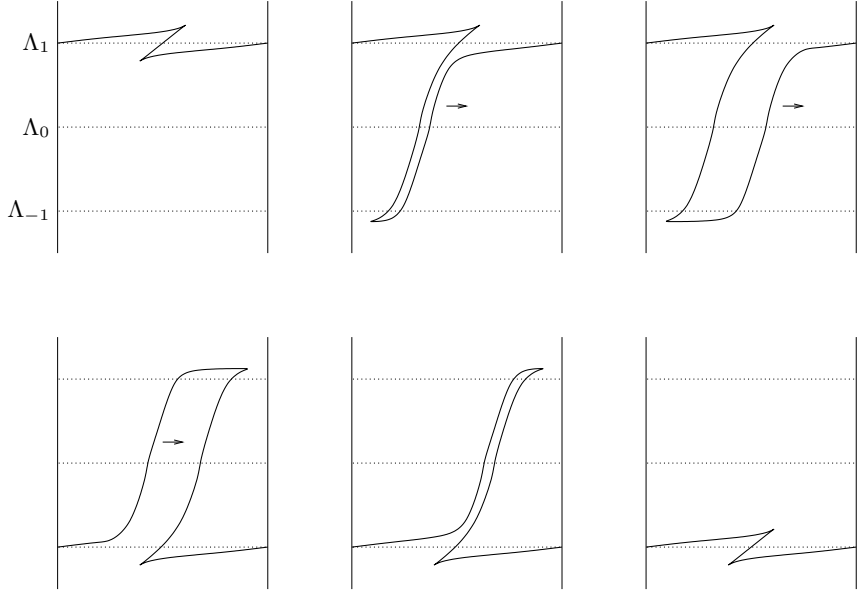}
  \caption{Isotoping $S_+(\Lambda_1)$ to $S_+(\Lambda_{-1})$
near~$A_0$.}
  \label{figure:isotopy-Lambda}
\end{figure}

\begin{lem}
\label{lem:isotopy}
Let $\xi$ be a tight contact structure on $S^1\times\R^2$,
and $K_0,K_1$ oriented Legendrian knots in $(S^1\times\R^2,\xi)$
topologically isotopic to~$S^1$. Then one can find
Legendrian isotopic stabilisations of $K_0$ and~$K_1$.
\end{lem}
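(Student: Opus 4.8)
The plan is to connect $K_0$ and $K_1$ by a convex annulus and to assemble the Legendrian isotopy of suitable stabilisations from repeated use of Lemma~\ref{lem:model}, one application for each dividing curve that must be crossed. Only the tightness of $\xi$ will enter, which is precisely the property that Corollary~\ref{cor:nbhd} supplies in the proof of Theorem~\ref{thm:isotopy}. Since $K_0$ and $K_1$ are both topologically isotopic to the core $S^1$, they are topologically isotopic to each other; after a $C^0$-small perturbation I may take them disjoint, and then they cobound an embedded annulus $A\subset S^1\times\R^2$ with $\partial A=K_0\sqcup(-K_1)$. I use the surface framing induced by $A$ as the reference framing along each $K_i$.

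Next I make $A$ convex. A single positive or negative stabilisation lowers the twisting number of the contact framing of $K_i$ relative to the framing induced by $A$ by one, so after enough stabilisations of $K_0$ and $K_1$ I may assume that both twisting numbers are negative. This is exactly the condition under which $A$ can be perturbed, rel its Legendrian boundary, to a convex surface; write $\Gamma$ for its dividing set, so that $\#(\Gamma\cap K_i)$ equals twice the absolute value of the twisting number of $K_i$.

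Now tightness enters through Giroux's criterion, which forbids homotopically trivial closed curves in $\Gamma$. On the annulus this forces every closed component of $\Gamma$ to be parallel to the core, while every arc is either a through-arc from $K_0$ to $K_1$ or a boundary-parallel arc. An outermost boundary-parallel arc is the dividing arc of a bypass; attaching this bypass removes the arc and changes the corresponding knot by a stabilisation, and iterating clears away all boundary-parallel arcs at the cost of finitely many stabilisations. The dividing set then consists solely of core-parallel circles; placing a Legendrian ruling curve in each region between consecutive circles cuts $A$ into a chain of sub-annuli, each carrying a single core-parallel dividing curve and therefore admitting a neighbourhood contactomorphic to that of the model annulus~$A_0$. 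Lemma~\ref{lem:model} then slides a once-stabilised ruling across the single dividing curve of each piece, and composing these isotopies carries a stabilisation of $K_0$ to a stabilisation of $K_1$.

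The heart of the argument, and the step I expect to be hardest, is the dividing-set reduction: one must check that the boundary-parallel arcs can be removed outermost-first, that each bypass attachment is a genuine (de)stabilisation deleting exactly one arc, that the procedure terminates in a system of core-parallel circles, and that the interpolating Legendrian rulings can be arranged so that Lemma~\ref{lem:model}, or its mirror with $S_-$ in place of $S_+$, applies verbatim across every sub-annulus. Because the statement only asks for the existence of some stabilisations $S_+^{m_i}S_-^{n_i}K_i$, there is no need to balance the two sides against each other: the composite isotopy constructed above is itself the desired Legendrian isotopy between the two stabilised knots.
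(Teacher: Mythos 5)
Your opening move contains the fatal gap: from the fact that $K_0$ and $K_1$ are disjoint and topologically isotopic you conclude that they cobound an embedded annulus in $S^1\times\R^2$. This is false in general, and the paper goes out of its way to warn about exactly this point: two disjoint knots, each topologically isotopic to the core, can clasp each other in the pattern of the Whitehead link, and then no embedded annulus with boundary $K_0\sqcup (-K_1)$ exists. (An isotopy of $K_1$ alone is free to pass through $K_0$, so ``isotopic'' puts no constraint on how the two knots link one another.) The paper circumvents this by introducing a third, auxiliary Legendrian knot $K$: one fixes $R$ so large that $K_0$ and $K_1$ are isotopic inside $S^1\times D_R$, and places $K$, topologically isotopic to $S^1\times\{(0,3R)\}$, in the region $S^1\times (\R^2\setminus\overline{D}_{2R})$. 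Because the isotopy of $K_i$ to the core and the isotopy of $K$ to $S^1\times\{(0,3R)\}$ can be taken with disjoint supports, $K$ \emph{does} cobound an embedded annulus with each of $K_0$ and $K_1$; one then runs the convex-annulus argument twice and concludes by transitivity, using that stabilisation is well defined on Legendrian isotopy classes. Without this detour (or some substitute for it), your annulus $A$ simply may not exist.

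There is a second, independent error in your reduction of the dividing set. After the boundary-parallel arcs have been removed (by destabilisation --- pushing across a bypass \emph{destabilises} the boundary knot, i.e.\ the old knot is a stabilisation of the new one, so the bookkeeping runs in the opposite direction from what you wrote, though it can be repaired), what remains is \emph{either} an even number of arcs running from one boundary component to the other, \emph{or} a family of core-parallel closed curves; the two types cannot coexist, since a through-arc would have to intersect every core-parallel circle. Your claim that one is always left solely with core-parallel circles is therefore wrong: the circle case occurs only when both twisting numbers vanish. In the through-arc case (twisting strictly negative) Lemma~\ref{lem:model} is neither applicable nor needed: by Giroux flexibility one arranges the characteristic foliation of $A$ to be a Legendrian ruling by curves parallel to the boundary, and this ruling is itself a Legendrian isotopy from one boundary knot to the other, with no further stabilisation. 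Only in the circle case does the paper cut $A$ into model pieces and invoke Lemma~\ref{lem:model} --- and there the curves along which one cuts must be Legendrian \emph{divides} (circles of tangencies, as $\Lambda_{\pm 1}$ are for $A_0$), not ruling curves, which are transverse to the dividing set.
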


\begin{proof}
Write $D_r$ for the open $2$-disc of radius $r$ in $\R^2$, and
$\overline{D}_r$ for its closure. Choose $R>0$ sufficiently
large such that $K_0$ and $K_1$ are topologically
isotopic inside $S^1\times D_R$. Let $K$ be an oriented Legendrian
knot topologically isotopic to $S^1\times\{ (0,3R)\}$
inside $S^1\times (\R^2\setminus \overline{D}_{2R})$.
We claim that suitable stabilisations of $K_0$ and $K_1$ are
Legendrian isotopic to a stabilisation of $K$, and hence
Legendrian isotopic to each other.

The key to proving this claim (for $K_0$, say) is that,
by construction, $K$ and $K_0$ (one of them with reversed
orientation) bound an embedded annulus $A$ in $S^1\times\R^2$.
Beware that $K_0$ and $K_1$ do not, in general, bound
an annulus; an example is given by the
Whitehead link.

Write $t_A(K),t_A(K_0)$ for the twisting of the contact planes
along $K,K_0$, respectively, relative to the framing induced by~$A$.
By stabilising $K$ and $K_0$, if necessary, we may assume that
$t_A(K),t_A(K_0)\leq 0$. Then $A$ can be perturbed (relative to
its boundary $\partial A=K\sqcup K_0$) into a convex surface,
see~\cite[Proposition~3.1]{hond00}. We continue to write $A$ for the
annulus after this and the following perturbations.
If there is a boundary parallel dividing curve on~$A$, then the
corresponding boundary component can be destabilised
without affecting the convexity of~$A$,
see~\cite[Proposition~3.18]{hond00}.

So we may assume that $K$ and $K_0$ are connected by a convex annulus $A$
without boundary parallel dividing curves. The Giroux
criterion~\cite[Proposition~4.8.13]{geig08} tells us that,
since $\xi$ is tight, there are no homotopically trivial
closed curves in the dividing set of~$A$. Thus, the dividing set
consists either of an even number of curves connecting $K$
with~$K_0$, or a collection of simple closed curves parallel
to $K$ and~$K_0$. We now use the Giroux flexibility
theorem~\cite[Proposition~II.3.6]{giro91},
cf.~\cite[Theorem~3.4]{hond00} and~\cite[Theorem~4.8.11]{geig08},
to bring the annulus $A$ into standard form.

In the first case we can perturb $A$ such that
its characteristic foliation is given by curves parallel to $K$
and~$K_0$; this Legendrian ruling of $A$ defines a Legendrian isotopy
between $K$ and~$K_0$.

In the second case, which occurs
if $t_A(K)=t_A(K_0)=0$, we can assume that the characteristic foliation
consists of curves going from $K$ to $K_0$, with $K$ and $K_0$
Legendrian divides (i.e.\ curves in the characteristic foliation
consisting entirely of singular points, where the contact planes coincide
with the tangent planes to~$A$), and one further Legendrian divide between
each pair of dividing curves. Then each of the annuli between two
adjacent Legendrian divides has a characteristic foliation
like our model annulus $A_0$. Since the characteristic foliation
determines the germ of the contact structure near the
surface, cf.~\cite[Theorem~2.5.22]{geig08}, Lemma~\ref{lem:model}
tells us that the stabilised knots $S_+K$ and $S_+K_0$ are Legendrian
isotopic.
\end{proof}

\begin{proof}[Proof of Theorem~\ref{thm:isotopy}]
Let $\phi_t\co S^1\rightarrow M$, $t\in [0,1]$, be
an isotopy of topological embeddings with
$\phi_i(S^1)=K_i$ for $i=0,1$. By Corollary~\ref{cor:nbhd},
for each $t\in [0,1]$ there is a neighbourhood
$U_t$ of $\phi_t(S^1)$, diffeomorphic to
$S^1\times\R^2$ under a diffeomorphism sending
$\phi_t(S^1)$ to~$S^1$, with $\xi|_{U_t}$ tight, and a real number
$\varepsilon_t>0$ such that $\phi_s(S^1)\subset U_t$
for all $s\in (t-\varepsilon_t,t+\varepsilon_t)\cap [0,1]$.

By the Lebesgue lemma on open coverings of compact metric spaces,
there is a positive integer $N$ such that
for each $j\in\{1,\ldots,N\}$ the interval $[(j-1)/N,j/N]$
is contained in $(t_j-\varepsilon_{t_j},t_j+\varepsilon_{t_j})$
for some $t_j\in [0,1]$. We abbreviate $U_{t_j}$ to~$U_j$.
Notice that $\phi_{j/N}(S^1)\subset U_j\cap U_{j+1}$.

Relabel $K_1$ as $K_N$. For $j\in\{ 1,\ldots ,N-1\}$, let $K_j$ be
a Legendrian approximation of $\phi_{j/N}(S^1)$ contained in
the neighbourhood $U_j\cap U_{j+1}$;
such a $C^0$-close Legendrian approximation
exists by~\cite[Theorem~3.3.1]{geig08}.

By the preceding lemma, applied to the Legendrian knots
$K_{j-1}$ and $K_j$ in $U_j$, suitable stabilisations of
$K_{j-1}$ and $K_j$ are Legendrian isotopic, $j=1,\ldots ,N$.
It follows that some stabilisation of $K_0$ is Legendrian isotopic to
some stabilisation of~$K_N$
(which was the $K_1$ in the statement of the theorem).
\end{proof}

\end{document}